\definecolor{brightmaroon}{rgb}{0.76, 0.13, 0.28}
\definecolor{linkblue}{rgb}{0, 0.337, 0.227}
\newcommand{\defin}[1]{\emph{\color{brightmaroon}#1}}
\title{\MakeUppercase{Vertex Ranking of Degenerate Graphs}}
\author{%
  John Iacono%
    \thanks{Université libre de Bruxelles, Belgium.
Supported by the Fonds de la Recherche Scientifique-FNRS.} \and
  Piotr Micek%
    \thanks{Theoretical Computer Science Department, Jagiellonian University, Poland. \email{piotr.micek@uj.edu.pl}.
    Supported by the National Science Center of Poland
under grant UMO-2023/05/Y/ST6/00079 within the OPUS 25 program
      } \and
  Pat Morin%
    \thanks{School of Computer Science, Carleton University, Canada. \email{morin@scs.carleton.ca}. Supported by NSERC.}\and
  Bruce Reed%
    \thanks{Mathematical Institute, Academia Sinica, Taiwan. \email{bruce.al.reed@gmail.com}.  Supported by  NSTC Grant 112-2115-M-001 -013 -MY3}}
\newcommand{\rn}[1]{\chi_{\operatorname{#1-vr}}}
\newcommand{\trn}{\chi_{\mathrm{us}}}
\newcommand{\lrn}{\rn{\ell}}
\newcommand{\texp}{1-1/(\lfloor\ell/2\rfloor+1/2)}
\newcommand{\dexp}{1-\frac{1}{\lfloor\ell/2\rfloor+1/2}}
\begin{document}

\maketitle

\begin{abstract}
  An $\ell$-vertex-ranking of a graph $G$ is a colouring of the vertices of $G$ with integer colours so that in any connected subgraph $H$ of $G$ with diameter at most $\ell$, there is a vertex in $H$ whose colour is larger than that of every other vertex in $H$.  The $\ell$-vertex-ranking number, $\lrn(G)$, of $G$ is the minimum integer $k$ such that $G$ has an $\ell$-vertex-ranking using $k$ colours.  We prove that, for any fixed $d$ and $\ell$, every $d$-degenerate $n$-vertex graph $G$ satisfies $\lrn(G)= O(n^{1-2/(\ell+1)}\log n)$ if $\ell$ is even and $\lrn(G)= O(n^{1-2/\ell}\log n)$ if $\ell$ is odd. The case $\ell=2$ resolves (up to the $\log n$ factor) an open problem posed by \citet{karpas.neiman.ea:on} and the cases $\ell\in\{2,3\}$ are asymptotically optimal (up to the $\log n$ factor).
\end{abstract}

\section{Introduction}

An $\ell$-vertex-ranking of a graph $G$ is a colouring of the vertices of $G$ with integer colours so that in any connected subgraph $H$ of $G$ with diameter at most $\ell$, there is a vertex in $H$ whose colour is larger than that of every other vertex in $H$.  The \defin{$\ell$-vertex-ranking} number $\lrn(G)$ is the minimum integer $k$ such that $G$ has an $\ell$-vertex-ranking $\varphi:V(G)\to\{1,\ldots,k\}$. The \defin{$\ell$-vertex-ranking} number of a graph class $\mathcal{G}$ is $\lrn(\mathcal{G}):=\sup\{\lrn(G):G\in\mathcal{G}\}$.

When $\ell=1$, a colouring is a $1$-vertex-ranking if and only if it is a proper colouring of $G$, so $\chi(G)=\rn{1}(G)$ and, for any $\ell\ge 1$, $\chi(G)\le \lrn(G)$.  Besides the case $\ell=1$, two special cases have received extra attention.  When $\ell=\infty$, $\rn{\infty}(G)$ is equal to the \defin{vertex ranking number}, the \defin{centered chromatic number}, and the \defin{treedepth} of $G$
\cite{nesetril.ossona:tree-depth},
which plays a central role in the theory of sparsity  \cite{nesetril.ossona:sparsity}.
At the other extreme, when $\ell=2$, a $2$-vertex-ranking of $G$ is also known as a \defin{restricted star colouring} \cite{shalu.antony:complexity} or a \defin{unique superior colouring} of $G$ \cite{karpas.neiman.ea:on}.  Previous results for $\ell=2$ and fixed $\ell\ge 2$ are summarized in \cref{previous_works}.

\begin{table}
    \centering{
        \begin{tabular}{|l|c|c|l|} \hline
          \multicolumn{4}{|c|}{$\rn{2}$} \\\hline
            Graph class & Upper Bound & Lower Bound & Ref. \\ \hline
            Trees & $O(\log n/\log\log n)$ & $\Omega(\log n/\log\log n)$ & \cite{karpas.neiman.ea:on} \\
            Planar graphs & $O(\log n/\log^{(3)} n)$ & $O(\log n/\log^{(3)} n)$ & \cite{bose.dujmovic.ea:asymptotically} \\
            Proper minor closed & $O(\log n)$ & $\Omega(\log n/\log\log n)$ & \cite{karpas.neiman.ea:on} \\
            $d$-cubes & $d+1$ & $d+1$ & \cite{almeter.demircan.ea:graph} \\
            Max-degree 3 & $7$ & & \cite{almeter.demircan.ea:graph} \\
            Max-degree $\Delta$ & $O(\min\{\Delta^2,\Delta\sqrt{n}\})$ & $\Omega(\Delta^2/\log \Delta)$ & \cite{karpas.neiman.ea:on,almeter.demircan.ea:graph} \\
            $d$-degenerate & $O(d\sqrt{n})$ & $\Omega(n^{1/3} + d^2/\log d)$ & \cite{karpas.neiman.ea:on,almeter.demircan.ea:graph} \\
            \hline \multicolumn{4}{c}{} \\
            \hline
            \multicolumn{4}{|c|}{$\rn{\ell}$, for fixed $\ell\ge 2$} \\\hline
            Simple treewidth $\le t$ & $O(\log n/\log^{(t)} n)$ & $\Omega(\log n/\log^{(t)} n)$ & \cite{bose.dujmovic.ea:asymptotically} \\
            Treewidth $\le t$ & $O(\log n/\log^{(t+1)} n)$ & $\Omega(\log n/\log^{(t+1)} n)$ & \cite{bose.dujmovic.ea:asymptotically} \\
            Planar graphs & $O(\log n/\log^{(3)} n)$ & $\Omega(\log n/\log^{(3)} n)$ & \cite{bose.dujmovic.ea:asymptotically} \\
            Outerplanar graphs & $O(\log n/\log^{(2)} n)$ & $\Omega(\log n/\log^{(2)} n)$ &  \cite{bose.dujmovic.ea:asymptotically,karpas.neiman.ea:on} \\
            Genus-$g$ graphs & $O(g\log n/\log^{(3)} n)$ & $\Omega(\log n/\log^{(3)} n)$ & \cite{bose.dujmovic.ea:asymptotically} \\
            \hline
        \end{tabular}
    } %
    \caption{Summary of previous results on $\rn{2}$ and $\lrn$ for fixed $\ell\ge 2$. Here, $\log^{(c)} n:=\underbrace{\log\log\cdots\log}_c n$.
    }
\label{previous_works}
\end{table}

The current work is motivated by the gap between the upper and lower bounds for $\rn{2}(G)$ for $d$-degenerate graphs $G$. For fixed $d\ge 2$, the upper bound is $O(\sqrt{n})$, while the lower bound is $\Omega(n^{1/3})$. Closing this gap is stated explicitly as an open problem by \citet{karpas.neiman.ea:on} and \citet{bose.dujmovic.ea:asymptotically}. Our first contribution solves
this problem, up to a logarithmic factor.

\begin{thm}\label{unique_superior}
  For any positive integer $d$ there exists a constant $c:=c(d)$ such that, for every integer $n\ge d$, every $n$-vertex $d$-degenerate graph $G$ satisfies $\rn{2}(G)\le cn^{1/3}\log n$.
\end{thm}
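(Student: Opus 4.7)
The plan is to recast 2-vertex-ranking as an iterative peeling problem and then to bound the number of peels using the degeneracy of $G$.

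My first step is to reformulate: a colouring $\varphi$ is a 2-vertex-ranking of $G$ if and only if, for every pair $u,u'$ with $\varphi(u)=\varphi(u')$, $u$ and $u'$ are non-adjacent and every common neighbour of $u,u'$ in $G$ has strictly greater colour. Consequently, if we assign colours from the largest down to $1$, and if $R$ denotes the set of currently uncoloured vertices, then any set $S\subseteq R$ that is a 2-independent set of the induced subgraph $G[R]$ (i.e. $S$ is independent in $G$ and no two vertices of $S$ share a common neighbour inside $R$) may all receive the next colour: common neighbours of pairs in $S$ are forced to lie in $V\setminus R$ and therefore already have a strictly greater colour. Thus $\rn{2}(G)$ is at most the minimum number of rounds of iterative 2-independent-set peeling required to exhaust $V(G)$.

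My second step is the key structural lemma: in any induced subgraph $H=G[R]$ of a $d$-degenerate graph, there is a 2-independent set of $H$ of size at least $|R|/(c\,n^{1/3})$ for a constant $c=c(d)$. To prove this, I would split $R$ by a degree threshold $T=n^{1/3}$ into \emph{light} vertices (those with $\deg_H(v)<T$) and \emph{heavy} vertices ($\deg_H(v)\ge T$). Since $H$ is $d$-degenerate, there are at most $2d|R|/T=O(d|R|/n^{1/3})$ heavy vertices. For the subgraph $H_L$ induced by light vertices, a folklore bound gives $\chi(H_L^2)\le O(d\cdot T)=O(d\,n^{1/3})$ because $H_L^2$ has degeneracy $O(d\,\Delta(H_L))$ in the degeneracy ordering of $H_L$; therefore $H_L$ has a 2-independent set of size $\Omega(|V(H_L)|/(d\,n^{1/3}))$. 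Combining with the fact that heavy vertices are scarce yields a 2-independent set of $H$ of the required size.

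My third step is to combine and run the recursion/iteration: at each round, the lemma removes a $\Omega(1/n^{1/3})$ fraction of the remaining vertices, so after $O(n^{1/3}\log n)$ rounds everything is coloured. The main obstacle will be two-fold. First, the ``heavy-vertex'' contribution must be absorbed correctly: simply assigning each heavy vertex a distinct colour would cost $\Theta(n^{2/3})$, too many, so I plan either to recurse on $G[V_H]$ (which has only $O(n^{2/3})$ vertices and is still $d$-degenerate, giving by induction $O(n^{2/9}\log n)$ further colours) or to fold heavy vertices into the 2-independent sets chosen in the main loop via an amortised argument (e.g.\ a potential function $\Phi(R)=|R|+\alpha\sum_{v\in V_H(R)}\deg_{G[R]}(v)$ that decreases by a $\Omega(1/n^{1/3})$ factor each round). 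Second, one must verify that the cross-constraints between heavy and light colour classes remain consistent with the 2-VR condition---specifically that common neighbours of same-colour pairs are indeed higher-coloured---which is where the high-to-low processing order is essential. The logarithmic factor in the bound is an unavoidable artefact of the geometric shrinking in the peeling analysis.
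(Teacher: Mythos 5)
Your key structural lemma is false as stated, so the peeling argument cannot be run. Take $G$ to be a disjoint union of $n^{1/3}$ stars, each with $n^{2/3}$ leaves; this graph is $1$-degenerate and has roughly $n$ vertices, yet with $R=V(G)$ the largest $2$-independent set of $G[R]$ has size only $n^{1/3}$ (you can take at most one vertex per star), far below the claimed $\Omega(|R|/n^{1/3})=\Omega(n^{2/3})$. The place your derivation slips is the final ``combining'' step: a $2$-independent set of $H_L$ is \emph{not} a $2$-independent set of $H$, because two light vertices may share a common \emph{heavy} neighbour inside $R$ (in the star example, every pair of leaves does). You do sense this issue --- you flag the cross-constraints between heavy and light classes --- but neither of your proposed patches closes it. Recursing on $G[V_H]$ with higher colours handles heavy--heavy edges and heavy--heavy paths through heavy middles, but leaves heavy--light--heavy paths unconstrained: two heavy vertices that the recursion colours alike may have a light common neighbour, which by construction receives a \emph{lower} colour, violating the ranking condition. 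Conversely, if heavy vertices are coloured lower, light--heavy--light paths break. Quantitatively, the auxiliary graph on $V_H$ induced by ``shares a light common neighbour'' can have average degree on the order of $T^3/d$, so with $T=n^{1/3}$ there is no cheap way to properly colour it. A deterministic peeling of this flavour --- colouring $G^2$ restricted to low-degree vertices and paying for high-degree vertices separately --- is essentially how the prior $O(d\sqrt{n})$ bound of Karpas, Neiman and Smorodinsky is obtained, and balancing $\Delta$ against $n/\Delta$ stalls at $\sqrt{n}$.

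The paper's route is structurally different and is what makes $n^{1/3}$ reachable. It first proves a bound of the form $O(\Delta^{\lfloor\ell/2\rfloor - 1/2}\log^{5/4}n)$ for $d$-degenerate graphs of maximum degree $\Delta$ (so $O(\sqrt{\Delta}\log^{5/4}n)$ when $\ell=2$, a $\sqrt{\Delta}$ factor better than the bound $O(d\Delta)$ one gets from colouring $G^2$ directly), and then removes vertices of degree at least $\Delta\approx n^{2/3}$, of which there are only $O(n/\Delta)\approx n^{1/3}$, and gives each a unique top colour; the two parts then balance at $n^{1/3}$. The $\sqrt{\Delta}$ saving is the heart of the argument and is inherently probabilistic: after a degree-based layering of $V(G)$ into $O(\log n)$ layers, vertices are coloured at random within per-layer palettes, paths that could violate the ranking condition are charged to carefully chosen vertices via a Cairns-style counting lemma, and the number of ``problematic'' vertices to be recoloured with a fresh top palette is controlled by Bernstein's inequality. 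Your purely greedy/peeling framework does not have an analogue of this step, which is precisely the missing ingredient separating $O(\sqrt{n})$ from $O(n^{1/3}\log n)$.
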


\Cref{unique_superior} is an immediate consequence of the following more general upper bound for the $\ell$-vertex-ranking number of $d$-degenerate graphs.

\begin{thm}\label{l_d_degenerate_upper_bound}
  For any positive integers $d$ and $\ell$ there exists a constant $c:=c(d,\ell)$ such that, for every integer $n\ge d$, every $n$-vertex $d$-degenerate graph $G$ satisfies
  \[
    \lrn(G)\le c n^{\dexp}\log n
    = \begin{cases}
      cn^{1-\frac{2}{\ell}}\log n & \text{if $\ell$ is odd} \\
      cn^{1-\frac{2}{\ell+1}}\log n & \text{if $\ell$ is even.}
      \end{cases}
  \]
\end{thm}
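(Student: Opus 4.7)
Set $r:=\lfloor\ell/2\rfloor$ and $\alpha:=1-1/(r+1/2)=(2r-1)/(2r+1)$. The plan is strong induction on a refined invariant rather than on $n$ alone: for every $d$-degenerate graph $G$ on $n$ vertices whose largest connected subgraph of diameter $\le\ell$ contains at most $m$ vertices, I would prove $\lrn(G)\le c(d,\ell)\,n^{\alpha}\log m$. Since $m\le n$, this implies the stated bound.

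The inductive step rests on a structural lemma: every $d$-degenerate graph $G$ on $n$ vertices admits a set $S\subseteq V(G)$ with $|S|\le c_1(d,\ell)\,n^{\alpha}$ such that every connected diameter-$\le\ell$ subgraph of $G-S$ has at most $m/2$ vertices (where $m$ is the current bound for $G$). Given $S$, I recursively $\ell$-vertex-rank $G-S$ with the improved invariant $m/2$, using colors $\{1,\dots,k\}$ with $k\le\lrn(G-S)$, and then assign $S$ the $|S|$ fresh distinct top colors $\{k+1,\dots,k+|S|\}$. Correctness: any connected $H\subseteq G$ of diameter $\le\ell$ either lies in $G-S$ (handled by the recursive ranking) or meets $S$, in which case the maximum color on $H$ is attained in $S$ and is unique because all $S$-colors are distinct and dominate the colors on $G-S$. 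Telescoping over the $O(\log m)$ halvings of the invariant gives $\lrn(G)\le c(d,\ell)\,n^{\alpha}\log m$.

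The structural lemma is the technical heart. I would attack it via the $d$-degeneracy orientation of $G$ (each vertex having out-degree at most $d$), combined with a Moore-bound-style counting of short directed walks. Any connected diameter-$\le\ell$ subgraph $H$ of $G$ contains a ``peak'' vertex, maximal in the degeneracy order, from which $H$ decomposes into an up-path and a down-path whose lengths sum to at most $\ell$. The total number of length-$r$ up-walks in $G$ is at most $n\,d^{r}$, and a careful comparison of this walk-counting budget against the $|V(H)|$ pairs captured by a heavy $H$ bounds the number of vertices that can lie in any ``heavy'' diameter-$\le\ell$ subgraph of $G$. A greedy (or probabilistic) extraction of such subgraphs' peaks produces $S$ of the claimed size.

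The main obstacle will be proving this structural lemma with the sharp exponent $\alpha=(2r-1)/(2r+1)$. A naive argument that centres on BFS balls rather than degeneracy peaks gives a weaker bound; the improvement comes from the asymmetric ``up $r$ + down $r+1$'' (or the reverse) decomposition at the peak, which both handles the two parities of $\ell$ uniformly and explains the appearance of $\lfloor\ell/2\rfloor+1/2$ in the exponent. The $\log n$ factor is most likely introduced by a randomised construction of $S$ together with a union bound over the candidate heavy subgraphs, in order to simultaneously control all such configurations rather than one at a time.
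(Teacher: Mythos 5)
Your approach is genuinely different from the paper's, and it contains a fatal gap: the structural lemma at its heart is false. The paper does not perform an iterated deletion of ``heavy ball'' hitting sets. Instead, it proves a bounded-degree version (Theorem~\ref{l_degenerate_and_degree}: $\lrn(G)\le c\Delta^{\lfloor\ell/2\rfloor-1/2}\log^{5/4}n$ when $G$ is $d$-degenerate with maximum degree~$\Delta$) via a two-phase randomized colouring analysed with Bernstein's inequality, and then derives Theorem~\ref{l_d_degenerate_upper_bound} by a single deletion of the set $S$ of vertices of degree $\ge\Delta$ (of size at most $2dn/\Delta$ by degeneracy), choosing $\Delta\approx n^{1/(\lfloor\ell/2\rfloor+1/2)}$ to balance $|S|$ against the bound for $G-S$. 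There is no recursion on the invariant ``maximum size of a diameter-$\le\ell$ connected subgraph.''

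The structural lemma you propose --- every $d$-degenerate $n$-vertex graph $G$, all of whose connected diameter-$\le\ell$ subgraphs have at most $m$ vertices, has a set $S$ with $|S|\le c_1 n^{\alpha}$ such that in $G-S$ every such subgraph has at most $m/2$ vertices --- does not hold. Take $\ell=2$ and let $G$ be the disjoint union of $n^{2/3}$ stars, each with $n^{1/3}$ leaves; then $G$ is $1$-degenerate and $m\approx n^{1/3}$. To reduce the largest diameter-$2$ connected subgraph to at most $m/2$ vertices, $S$ must intersect essentially every star (deleting its center, or roughly half of its leaves), which forces $|S|=\Omega(n^{2/3})\gg n^{1/3}=n^{\alpha}$. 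So the invariant need not decrease at all under a small deletion, and the telescoping over $O(\log m)$ rounds never gets started. (The example does not contradict Theorem~\ref{l_d_degenerate_upper_bound} --- that graph satisfies $\rn{2}(G)=2$ --- it only shows your proposed intermediate claim is too strong.) The observation that giving $S$ fresh, pairwise-distinct top colours handles all diameter-$\le\ell$ subgraphs meeting $S$ is correct and is exactly the role $S$ plays in the paper's one-shot reduction; the difficulty is entirely in producing a valid small $S$, and the walk-counting/peak-decomposition sketch you give does not yield the lemma you need.
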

\Cref{l_d_degenerate_upper_bound} exhibits a parity phenomenon one encounters when counting the number of paths of length $\ell$ in a $d$-degenerate graph of maximum-degree $\Delta$. Because of this, the bound in \cref{l_d_degenerate_upper_bound} is the same for $\ell=2k$ and $\ell=2k+1$, for any positive integer $k$.  One might think that this is just an artifact of the proof technique and that the bound for even values of $\ell$ is not tight.  However, \citet{karpas.neiman.ea:on} proved the existence of $2$-degenerate $n$-vertex graphs with $\rn{2}(G)\in\Omega(n^{1/3})$ and \cref{l_d_degenerate_upper_bound} (with $\ell=2$) matches this lower bound to within a logarithmic factor.  Since $\rn{3}(G)\ge\rn{2}(G)$ for any graph $G$, it also matches this bound when $\ell=3$.  Thus, \cref{l_d_degenerate_upper_bound} is tight (up to a $\log n$ factor) for $\ell=2$ and $\ell=3$, leading us to suspect that it is tight for any fixed $\ell$.

Like the upper bound on $\rn{2}(G)$ in \cite{karpas.neiman.ea:on}, \cref{l_d_degenerate_upper_bound} follows quickly from a theorem about graphs that are both $d$-degenerate and have maximum-degree $\Delta$. For such graphs, we prove:

\begin{thm}\label{l_degenerate_and_degree}
  For all positive integers $d$ and $\ell\ge 2$ there exists a constant $c:=c(d,\ell)$ such that, for every integer $\Delta\ge d$ and every integer $n\ge \Delta$, every $n$-vertex $d$-degenerate graph $G$ of maximum-degree at most $\Delta$ satisfies
  \[
    \lrn(G)\le c\Delta^{\lfloor\ell/2\rfloor-1/2}\log^{5/4} n
    =\begin{cases}
    c\Delta^{\ell/2-1}\log^{5/4} n & \text{if $\ell$ is odd.} \\
    c\Delta^{\ell/2-1/2}\log^{5/4} n & \text{if $\ell$ is even.}
    \end{cases}
  \]
  Furthermore, if $\Delta^{\lfloor\ell/2\rfloor-1}\ge\log n$ then $\lrn(G)\le c\Delta^{\lfloor\ell/2\rfloor-1/2}\log n$.
\end{thm}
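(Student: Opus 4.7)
The plan is to build an $\ell$-vertex-ranking of $G$ by a peeling procedure over $O(\log n)$ rounds. In each round I select a random subset $S\subseteq V(G)$, assign the vertices of $S$ the next block of top colours, and recurse on the remaining graph. I use the standard equivalence that a colouring is an $\ell$-vertex-ranking of $G$ iff every path of length at most $\ell$ in $G$ has a unique maximum-coloured vertex: a diameter-$\le\ell$ connected subgraph with two tied maxima $u,v$ contains a $uv$-path of length $\le\ell$ with the same tie, and a path is itself a diameter-$\le\ell$ subgraph. So the random $S$ must satisfy two properties: (i)~every length-$\le\ell$ path in the surviving subgraph meets $S$, keeping the recursion on track; and (ii)~for each length-$\le\ell$ path $P$ of $G$ meeting $S$, the random tags assigned to the vertices of $P\cap S$ attain a unique maximum, so the topmost block of colours provides the required unique maximum.

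The central counting input is a sharp bound on the number of length-$\ell$ paths in $G$, exploiting the $d$-degeneracy. Fixing a degeneracy ordering, every length-$\ell$ path has a unique peak vertex in this ordering, and the two arms at the peak are strictly descending with lengths $a$ and $\ell-a$. Since one of the two arms has length at most $\lfloor\ell/2\rfloor$, and each vertex has at most $d$ later neighbours (and at most $\Delta$ neighbours in total), a careful enumeration of descending walks produces a parity-refined path count that is the key to the exponent $\lfloor\ell/2\rfloor-1/2$ in the final bound; this is also why $\ell$ and $\ell+1$ receive the same bound when $\ell$ is even.

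In each round I include each vertex in $S$ independently with probability $p$ and tag each $v\in S$ with a uniform random value in $\{1,\ldots,k\}$ for $k=\Theta(\Delta^{\lfloor\ell/2\rfloor-1/2}\log^{1/4}n)$, with $p$ tuned to $\Delta$ and $\ell$. A Chernoff bound on $|P\cap S|$ delivers (i) for all surviving diameter-$\le\ell$ subgraphs simultaneously, and a union bound over all length-$\le\ell$ paths, using the path count above together with the tag-collision probability, delivers (ii), so positive probability of success follows for this choice of $k$. Iterating for $O(\log n)$ rounds produces a total of $O(k\log n)=O(\Delta^{\lfloor\ell/2\rfloor-1/2}\log^{5/4}n)$ colours. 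In the regime $\Delta^{\lfloor\ell/2\rfloor-1}\ge\log n$, each long path is hit by enough vertices of $S$ that standard concentration removes the $\log^{1/4}n$ slack in $k$, yielding the stronger bound $O(\Delta^{\lfloor\ell/2\rfloor-1/2}\log n)$. The main obstacle is the joint calibration of $p$ and $k$: $p$ must be small enough to keep the expected tag collisions controllable (for (ii)) yet large enough that every surviving path is hit (for (i)), a balance that only closes once the path count is refined to the parity-sensitive exponent $\lfloor\ell/2\rfloor$ rather than the naive $\ell/2$.
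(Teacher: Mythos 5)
Your proposal correctly identifies the two key ingredients that the paper uses—a peeling scheme over $O(\log n)$ rounds and a parity-sensitive count of length-$\le\ell$ paths in a $d$-degenerate graph of maximum degree $\Delta$—but the way they are combined does not close, and the gap is essential, not cosmetic.

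The union-bound step in item~(ii) cannot work with $k=\Theta\bigl(\Delta^{\lfloor\ell/2\rfloor-1/2}\log^{1/4}n\bigr)$. Even taking the sharp path count for granted, the number of undirected paths of length at most $\ell$ in $G$ is $\Theta\bigl(n\,d^{\lceil\ell/2\rceil}\Delta^{\lfloor\ell/2\rfloor}\bigr)$, and the probability that the two endpoints of a fixed path are both put in $S$ with colliding tags is at least on the order of $p^{2}/k$. The expected number of bad paths in a single round is therefore on the order of $n\Delta^{\lfloor\ell/2\rfloor}p^{2}/k = n\Delta^{1/2}p^{2}\log^{-1/4}n$, which is enormous for any constant $p$; a union bound over all paths is hopeless. (Note also that item~(i), ``every length-$\le\ell$ path in the surviving subgraph meets $S$,'' is impossible as stated for any $p<1$: a single edge with both endpoints outside $S$ is a length-$1$ path that survives and misses $S$; and if instead you want every short path in the \emph{current} graph to meet $S$, the same single-edge example forces $p$ essentially to $1$.) Global success with positive probability in one shot is precisely what the small value of $k$ does \emph{not} give you.

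The paper's proof gets around this by refusing to demand a collision-free random colouring. It partitions $V(G)$ into $O(\log n)$ layers by a deterministic, degree-based peeling (repeatedly removing vertices of degree $\ge 4d$), and it assigns each layer its own palette of $2k$ colours; crucially, each vertex's colour is drawn from a carefully chosen half of its layer's palette (the $k$ colours that minimize newly created ``problematic'' paths), which caps the number of violations any single vertex can create at $M=O(\Delta^{\lfloor\ell/2\rfloor}/k)$. Violations are then tolerated and repaired in a second phase: one vertex per violating path is recoloured with a fresh top palette, and this recolouring succeeds because, with high probability, each vertex of $G^{\ell}$ has only $O(k\log n)$ problematic out-neighbours. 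Establishing that last bound is where the real work is: the relevant indicator variables are not independent (all paths through a given vertex $w$ share $w$'s colour choice), so a plain Chernoff bound is unavailable, and the paper instead dominates the count by a sum of independent variables with carefully bounded variance and applies Bernstein's inequality. Your proposal has no analogue of the repair phase, no per-vertex (rather than global) bound on problematic paths, no mechanism to cap the damage from any single colour choice, and no concentration tool adapted to the dependency structure; each of these is load-bearing and none follows from a sharper path count alone. Finally, the deterministic degree-based layering is not an implementation detail—it is what makes the directed graph $G'$ used in the paper's second path-counting lemma have out-degree $O(d)$, which is what yields the $\Delta^{\lfloor\ell/2\rfloor-1}$ (rather than $\Delta^{\lfloor\ell/2\rfloor}$) factor in the per-vertex count of problematic paths, and hence the $\Delta^{\lfloor\ell/2\rfloor-1/2}$ exponent in the final bound. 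A uniformly random $S$ does not provide this.
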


\Cref{l_d_degenerate_upper_bound} follows from \cref{l_degenerate_and_degree}, by the following easy argument:  Let $S$ be the set of vertices in $G$ having degree at least $\Delta$, for some carefully chosen value of $\Delta$.  Since $G$ is $d$-degenerate, it has at most $dn$ edges, so the total degree of all vertices in $G$ is at most $2dn$, so $|S|\le 2dn/\Delta$.  Now apply \cref{l_degenerate_and_degree} to the graph $G-S$ which is $d$-degenerate and has maximum-degree $\Delta$ to obtain a colouring $\varphi:V(G-S)\to\{1,\ldots,k\}$.  Finally, colour every vertex in $S$ with a distinct colour larger than $k$.  In this way, the total number of colours used is $k + |S|\le c\Delta^{\lfloor\ell/2\rfloor-1/2}\log^{5/4} n + 2dn/\Delta$. Choosing $\Delta$ to balance these two quantities yields \cref{l_d_degenerate_upper_bound}.  This argument is presented in a little more detail at the end of \cref{the_proof}.

\section{Preliminaries}

For any standard graph-theoretic terminology and notation not defined here, we use the same conventions used in the textbook by \citet{diestel:graph}.  A graph $G$ has vertex set $V(G)$ and edge set $E(G)$.  For any $S\subseteq V(G)$, $G[S]$ denote the subgraph of $G$ induced by the vertices in $S$.  For any vertex $v$ of $G$, $N_G(v):=\{w:vw\in E(G)\}$ and $\deg_G(v):=|N_G(v)|$.  For an integer $\ell$, $G^\ell$ denotes the graph with vertex set $V(G)$ that contains an edge $vw$ if and only if some path in $G$ with at most $\ell$ edges contains both $v$ and $w$.

The following alternative definition of $\ell$-vertex-ranking turns out to be more convenient for proofs and is what we will use from this point on.
\begin{obs}\label{alternate_def}
  A vertex colouring $\varphi:V(G)\to\{1,\ldots,k\}$ of a graph $G$ is an $\ell$-vertex ranking of $G$ if and only if, for each path $v_0,\ldots,v_r$ in $G$ with at most $\ell$ edges, $\varphi(v_0)\neq \varphi(v_r)$ or $\max\{\varphi(v_1),\ldots,\varphi(v_{r-1})\}>\varphi(v_0)$.
\end{obs}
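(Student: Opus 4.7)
My plan is to prove the two implications separately, using the fact that a path of $r$ edges is itself a connected subgraph of diameter exactly $r$, and that any two vertices in a diameter-$\ell$ connected subgraph are joined by a path of length at most $\ell$.

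For the forward direction, I would assume $\varphi$ is an $\ell$-vertex-ranking under the original definition and let $v_0,\ldots,v_r$ be any path in $G$ with $r\le\ell$ edges. Treating this path as an induced structure $P$, it is a connected subgraph of $G$ with diameter $r\le\ell$, so by the definition there is a vertex of $P$ whose colour is strictly larger than every other colour in $P$. If $\varphi(v_0)=\varphi(v_r)$, neither endpoint can be this unique maximum, so the maximum is attained at some $v_i$ with $1\le i\le r-1$, giving $\max\{\varphi(v_1),\ldots,\varphi(v_{r-1})\}>\varphi(v_0)$. The edge case $r\le 1$ is handled separately: for $r=0$ the condition on $v_0=v_r$ is vacuous, and for $r=1$ the subgraph has diameter $1$ so $\varphi(v_0)\neq\varphi(v_1)$ is immediate.

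For the backward direction, I would proceed by contrapositive. Suppose the original definition fails: there is a connected subgraph $H$ of $G$ with diameter at most $\ell$ whose maximum colour is attained by two distinct vertices $u,v\in V(H)$. Since $H$ is connected with diameter $\le\ell$, there is a path $u=v_0,v_1,\ldots,v_r=v$ inside $H$ (and hence in $G$) with $r\le\ell$. Now $\varphi(v_0)=\varphi(v_r)$, and every interior vertex $v_i$ lies in $H$, so $\varphi(v_i)\le\varphi(u)=\varphi(v_0)$; hence $\max\{\varphi(v_1),\ldots,\varphi(v_{r-1})\}\le\varphi(v_0)$, contradicting the alternative condition on this path.

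This proof is essentially a bookkeeping argument; no step is hard. The only mildly delicate point is treating $r\in\{0,1\}$ correctly (where the ``interior'' of the path is empty), but both can be dispatched in a sentence. Since the observation is really just a reformulation, I would keep the write-up short, flagging that the path-only formulation of \cref{alternate_def} is what is used in every subsequent proof, so the equivalence with diameter-bounded connected subgraphs is worth recording once.
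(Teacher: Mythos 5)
Your proof is correct, and note that the paper states this as an observation without giving a proof at all, so there is nothing to compare it against; the argument you give (path subgraphs of diameter $r\le\ell$ in one direction, a shortest path between two maximal-colour vertices of a violating diameter-$\le\ell$ subgraph in the other) is the standard reformulation the authors implicitly have in mind.

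One small presentational point: when you write ``treating this path as an induced structure $P$'', you should say ``the subgraph of $G$ whose vertices and edges are those of the path''; the \emph{induced} subgraph on $\{v_0,\ldots,v_r\}$ could have chords and would still have diameter at most $r$, so the argument survives either reading, but ``induced'' is the wrong word for what you mean. Also, for $r=0$ the path is a single vertex, so $v_0=v_r$ and the disjunction as literally written is false; the cleanest fix is simply to note (as the paper's own later definition of a path implies) that paths are taken to have at least one edge, rather than to call the case ``vacuous''.
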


For a directed graph $G$, we write $\overrightarrow{vw}$ to denote the directed edge with source $v$ and target $w$.  For a vertex $v$ in a directed graph $G$, $N^+_{G}(v):=\{w\in V(G):\overrightarrow{vw}\in E(G)\}$ denotes the set of out-neighbours of $v$ and $N^-_G(v):=\{u\in V(G):\overrightarrow{uv}\in E(G)\}$ denotes the set of in-neighbours of $v$, $\deg^+_{G}(v):=|N^+_G(v)|$ is the out-degree of $v$, and $\deg^-_{G}(v):=|N^-_G(v)|$ is the in-degree of $v$. We also define $N^+_{G}[v]:=\{v\}\cup N^+_{G}(v)$ and $N^-_{G}[v]:=\{v\}\cup N^-_{G}(v)$ to be the closed out- and in-neighbourhoods of $v$, respectively.

We repeatedly make use of the following foklore result:

\begin{obs}\label{orientation_to_degeneracy}
  If an undirected graph $G$ has an orientation in which each vertex has out-degree at most $d$, then $G$ is $2d$-degenerate.
\end{obs}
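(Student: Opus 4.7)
The plan is to use the standard averaging argument for degeneracy: show that every subgraph of $G$ contains a vertex of degree at most $2d$, which is exactly what $2d$-degeneracy demands.

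First I would fix an orientation $\vec{G}$ of $G$ in which every vertex has out-degree at most $d$. Given any nonempty subgraph $H$ of $G$, I would restrict this orientation to the edges of $H$, obtaining an orientation $\vec{H}$ in which each vertex still has out-degree at most $d$ (removing edges can only decrease out-degrees). Summing out-degrees in $\vec{H}$ gives $|E(H)|\le d\cdot|V(H)|$, since every edge of $H$ is counted exactly once at its source.

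Next I would invoke the handshake identity: $\sum_{v\in V(H)}\deg_H(v)=2|E(H)|\le 2d\cdot|V(H)|$. Hence the average degree in $H$ is at most $2d$, so some vertex $v\in V(H)$ has $\deg_H(v)\le 2d$. Because $H$ was an arbitrary subgraph, $G$ is $2d$-degenerate by definition.

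There is essentially no obstacle here; the only point worth being careful about is that the inherited orientation on $H$ is well-defined and that its out-degrees are bounded by those in $\vec{G}$, which is immediate since we only delete arcs. The bound $2d$ (rather than $d$) is tight in general because degeneracy counts total degree while the orientation only controls out-degree.
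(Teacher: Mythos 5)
Your proof is correct and follows essentially the same averaging argument as the paper: restrict the orientation to a subgraph, bound the edge count by $d$ times the vertex count, and conclude via the handshake lemma that some vertex has degree at most $2d$.
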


\begin{proof}
  For any $S\subseteq V(G)$, the orientation shows that the induced subgraph $G[S]$ contains at most $d|S|$ edges and therefore the total degree of all vertices in $G[S]$ is at most $2d|S|$.  Therefore, for any $S\subseteq V(G)$, the induced graph $G[S]$ has a vertex of degree at most $2d$.
\end{proof}

An \defin{undirected path} $\Pi$ in a graph $G$ is a tree with exactly two leaves whose edges are all edges of $G$. The leaves of $\Pi$ are called the \defin{endpoints} of $\Pi$. The \defin{length} of $\Pi$ is the number of edges in $\Pi$, which is exactly one less than the number of vertices.  With a slight abuse of notation, we write $x_0,\ldots,x_r$ to denote a length-$r$ undirected path $\Pi$ where $x_0$ and $x_r$ are the endpoints of $\Pi$ and $\Pi$ contains the edge $x_{i-1}x_i$ for each $i\in\{1,\ldots,r\}$.  Note that each undirected path $\Pi:=x_0,\ldots,x_r$ in $G$ corresponds to exactly two paths $x_0,x_1,\ldots,x_r$ and $x_r,x_{r-1},\ldots,x_0$ in $G$.

For a graph $G$, let $\mathcal{P}_\ell(G)$ denote the set of all undirected paths of length at most $\ell$ in $G$.  The set $\mathcal{P}_\ell(G)$ is critical for us since, by \cref{alternate_def}, this is precisely the set of paths that need to be considered to determine if a vertex-colouring $\varphi$ of $G$ is an $\ell$-vertex-ranking.  The following lemma shows that the paths in $\mathcal{P}_\ell(G)$ can be mapped onto their endpoints in such a way that no endpoint receives too many paths.  Its proof uses a technique introduced by Cairns to upper bound the number of length-$\ell$ paths in planar graphs (see also \cite[Lemma~5]{devroye.dujmovic.ea:notes}).

\begin{lem}\label{advanced_cairns}
  For any integers $d\ge 2$, $\ell\ge 2$, $\Delta\ge d$ and any
  graph $G$ of maximum-degree $\Delta$ that has an orientation of maximum out-degree $d$ there exists a mapping $\rho:\mathcal{P}_\ell(G)\to V(G)$ such that
  \begin{compactenum}[(i)]
    \item $\rho(\Pi)$ is an endpoint of $\Pi$ for each $\Pi\in\mathcal{P}_\ell(G)$; and
    \item $|\rho^{-1}(v)| \le 2^{\ell+1}d^{\lceil \ell/2\rceil}\Delta^{\lfloor\ell/2\rfloor}$ for each $v\in V(G)$.
  \end{compactenum}
\end{lem}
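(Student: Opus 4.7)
The plan is to use the bounded-out-degree orientation of $G$ to map each undirected path to a favourable endpoint, following the counting trick of Cairns. Fix the given orientation of $G$ with maximum out-degree at most $d$; note that since $N^-_G(v)\subseteq N_G(v)$, each in-degree is at most $\Delta$. For an undirected path $\Pi=x_0,\dots,x_r\in\mathcal{P}_\ell(G)$, call a step $x_{i-1}\to x_i$ in the walk from $x_0$ to $x_r$ a \emph{forward} step when $\overrightarrow{x_{i-1}x_i}$ agrees with the orientation, and a \emph{backward} step otherwise. The two walks realizing $\Pi$ swap these counts, so at least one of them uses at most $\lfloor r/2\rfloor$ backward steps. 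We define $\rho(\Pi)$ to be an endpoint witnessing such a realization (breaking ties arbitrarily). Property~(i) is then immediate from this construction.

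For (ii), fix a vertex $v$ and upper bound $|\rho^{-1}(v)|$ by the number of walks $v=x_0,\dots,x_r$ with $r\le\ell$ and at most $\lfloor r/2\rfloor$ backward steps. Once $r$ and the set of indices of backward steps (with $b\le\lfloor r/2\rfloor$ members) are fixed, each forward step admits at most $d$ choices for the next vertex (an out-neighbour) while each backward step admits at most $\Delta$ choices (an in-neighbour, which lies in the full neighbourhood). This gives the raw bound
\[
|\rho^{-1}(v)|\le\sum_{r=1}^{\ell}\sum_{b=0}^{\lfloor r/2\rfloor}\binom{r}{b}\,d^{r-b}\Delta^b.
\]

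The remaining step is algebraic: since $d\le\Delta$, for $b\le\lfloor r/2\rfloor$ we can rewrite $d^{r-b}\Delta^b=d^{\lceil r/2\rceil}\Delta^{\lfloor r/2\rfloor}\cdot(d/\Delta)^{\lfloor r/2\rfloor-b}$ to see that each summand is at most $d^{\lceil r/2\rceil}\Delta^{\lfloor r/2\rfloor}$. Combining $\sum_{b\le \lfloor r/2\rfloor}\binom{r}{b}\le 2^r$ with the monotonicity of $r\mapsto d^{\lceil r/2\rceil}\Delta^{\lfloor r/2\rfloor}$ and $\sum_{r=1}^{\ell}2^r\le 2^{\ell+1}$ collapses the double sum to the claimed $2^{\ell+1}d^{\lceil\ell/2\rceil}\Delta^{\lfloor\ell/2\rfloor}$. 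No step is genuinely delicate; the only real insight is the choice of $\rho$, engineered so that the expensive $\Delta$-factor appears at most $\lfloor r/2\rfloor$ times per walk rather than the trivial $r$ times, which is precisely what distinguishes this bound from the naive $(2\Delta)^\ell$.
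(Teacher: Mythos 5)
Your proof is correct and follows essentially the same approach as the paper's: choose an orientation-aware endpoint for each undirected path so that at least half the steps go with the orientation, then count walks from a fixed vertex with at most $\lfloor r/2\rfloor$ backward steps. The only cosmetic difference is bookkeeping — you sum explicitly over the number $b$ of backward steps with a binomial coefficient and then bound each term by $d^{\lceil r/2\rceil}\Delta^{\lfloor r/2\rfloor}$, whereas the paper directly counts $2^r$ bit strings times $d^{\lceil r/2\rceil}\Delta^{\lfloor r/2\rfloor}$ direction choices; these reduce to the same estimate.
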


\begin{proof}
  Fix some orientation of $G$ of maximum out-degree $d$. For each $\Pi\in\mathcal{P}_\ell(G)$ let $x_0,\ldots,x_r$ be one of the two paths in $G$ that corresponds to $\Pi$, chosen so that the edge $x_{i-1}x_{i}$ is oriented from away from $x_{i-1}$ and towards $x_{i}$ for at least half the indices $i\in\{1,\ldots,r\}$.  When $x_{i-1}x_{i}$ is oriented away from $x_{i-1}$, we call it a \defin{downstream edge} of $\Pi$. Otherwise we call $x_{i-1}x_{i}$ an \defin{upstream edge} of $\Pi$. In other words, we choose the endpoint $x_0$ so that at least half the edges of $\Pi$ are downstream edges, and we set $\rho(\Pi):=x_0$.  Observe that the path $x_0,\ldots,x_r$ can be uniquely reconstructed from the following information:
  \begin{compactenum}[(a)]
    \item A sequence $b_1,\ldots,b_r$ of $r$ bits, where $b_i=1$ if $x_{i-1}x_i$ is a downstream edge of $\Pi$ and $b_i=0$ if $x_{i-1}x_i$ is an upstream edge of $\Pi$.\label{bitstring}
    \item A sequence $\delta_1,\ldots,\delta_r$ of integers, where $\delta_i\in\{1,\ldots,d\}$ if $b_i=1$ and $\delta_i\in\{1,\ldots,\Delta\}$ if $b_i=0$.  The integer $\delta_i$ uniquely identifies the neighbour $x_i$ of $x_{i-1}$ so that, starting at $x_0$ we can uniquely reconstruct the path $x_0,\ldots,x_r$ which uniquely identifies the undirected path $\Pi$.\label{directions}
  \end{compactenum}
  The number of choices for (\ref{bitstring}) is $2^r$.  Since (\ref{bitstring}) has at least as many $1$-bits as $0$-bits, the number of choices for (\ref{directions}) is at most $d^{\lceil r/2\rceil}\Delta^{\lfloor r/2\rfloor}$.  So the number of paths of length $r$ in $\rho^{-1}(x_0)$ is at most $2^rd^{\lceil r/2\rceil}\Delta^{\lfloor r/2\rfloor}$.  Summing $r$ over $1$ to $\ell$ completes the proof.
\end{proof}

Observe that, for each edge $vw$ in $G^{\ell}$ there is at least one path in $\mathcal{P}_\ell(G)$ with endpoints $v$ and $w$.  For each edge $vw$ of $G^{\ell}$ we can select one such representative path $\Pi_{vw}\in\mathcal{P}_\ell(G)$.  If we then orient each edge $vw$ of $G^{\ell}$ away from $\rho(\Pi_{vw})$ then we get an orientation of $G^{\ell}$ in which each vertex has out-degree at most $2^{\ell+1}d^{\lceil \ell/2\rceil}\Delta^{\lfloor\ell/2\rfloor}$.  Combined with \cref{orientation_to_degeneracy} this gives the following corollary:

\begin{cor}\label{degeneracy_of_g_l}
  For any integers $d\ge 1$, $\Delta\ge d$ and any $d$-degenerate graph $G$ of maximum degree $\Delta$, $G^{\ell}$ is $2^{\ell+2}d^{\lceil \ell/2\rceil}\Delta^{\lfloor\ell/2\rfloor}$-degenerate.
\end{cor}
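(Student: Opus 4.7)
The plan is to flesh out the short sketch given in the paragraph preceding the corollary into a formal argument, resting entirely on \cref{advanced_cairns,orientation_to_degeneracy}.

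First, since $G$ is $d$-degenerate, the standard procedure of repeatedly removing a minimum-degree vertex and orienting its remaining incident edges outward produces an (acyclic) orientation of $G$ in which every vertex has out-degree at most $d$. This is exactly the input required by \cref{advanced_cairns}, and applying that lemma yields a map $\rho:\mathcal{P}_\ell(G)\to V(G)$ such that $\rho(\Pi)$ is an endpoint of $\Pi$ for every $\Pi$ and $|\rho^{-1}(v)|\le 2^{\ell+1}d^{\lceil\ell/2\rceil}\Delta^{\lfloor\ell/2\rfloor}$ for every $v\in V(G)$.

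Next I orient $G^\ell$. For each edge $vw\in E(G^\ell)$, the definition of $G^\ell$ guarantees at least one undirected path in $\mathcal{P}_\ell(G)$ with endpoints $v$ and $w$; pick any such path and call it $\Pi_{vw}$. Orient $vw$ away from $\rho(\Pi_{vw})$ and toward the other endpoint. Under this orientation, every out-edge of a vertex $u$ has the form $\overrightarrow{uw}$ with $\rho(\Pi_{uw})=u$, so the map $w\mapsto \Pi_{uw}$ is an injection from the out-neighbourhood of $u$ in $G^\ell$ into $\rho^{-1}(u)$ (distinct targets $w,w'$ give paths with distinct ``other endpoint''). Hence $\deg^+_{G^\ell}(u)\le |\rho^{-1}(u)|\le 2^{\ell+1}d^{\lceil\ell/2\rceil}\Delta^{\lfloor\ell/2\rfloor}$.

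Now \cref{orientation_to_degeneracy} applied to this orientation of $G^\ell$ yields that $G^\ell$ is $2\cdot 2^{\ell+1}d^{\lceil\ell/2\rceil}\Delta^{\lfloor\ell/2\rfloor} = 2^{\ell+2}d^{\lceil\ell/2\rceil}\Delta^{\lfloor\ell/2\rfloor}$-degenerate, as claimed. There is no genuine obstacle: once \cref{advanced_cairns} is in hand, the proof is a short bookkeeping argument. The only minor wrinkle is the boundary case $d=1$ (\cref{advanced_cairns} is stated for $d\ge 2$), which is harmless since every $1$-degenerate graph is also $2$-degenerate and the resulting bound only improves the constant.
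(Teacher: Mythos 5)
Your proof is correct and follows essentially the same route as the paper: apply \cref{advanced_cairns} to get the map $\rho$, orient each edge $vw$ of $G^\ell$ away from $\rho(\Pi_{vw})$ for a chosen representative path, observe that out-degree is bounded by $|\rho^{-1}(\cdot)|$, and invoke \cref{orientation_to_degeneracy}. The only point I'd flag is your handling of $d=1$: substituting $d=2$ does not ``only improve the constant'' — it replaces the factor $d^{\lceil\ell/2\rceil}=1$ with $2^{\lceil\ell/2\rceil}$, which is \emph{larger}, so that route proves a weaker statement than the corollary asserts for $d=1$. The cleaner fix is to note that the proof of \cref{advanced_cairns} goes through verbatim for $d=1$ (there are then at most $d=1$ choices for each downstream edge), so the hypothesis $d\ge 2$ in the lemma is not actually needed and the bound with $d=1$ holds directly.
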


In order to obtain the bound in \cref{l_degenerate_and_degree} we need a special version of \cref{advanced_cairns} that only considers undirected paths $v_0,\ldots,v_r\in \mathcal{P}_\ell(G)$ such that $v_0v_1$ is directed toward $v_0$ and $v_{r-1}v_r$ is directed toward $v_r$.  For a directed graph $G'$ whose underlying undirected graph is $G$, let $\widehat{\mathcal{P}}_\ell(G')$ denote the set of undirected paths $x_0,\ldots,x_r$ in $G$ of length $r\le\ell$ and such that $\overleftarrow{x_0x_1}$ is an edge of $G'$ and $\overrightarrow{x_{r-1}x_{r}}$ is also an edge of $G'$.

\begin{lem}\label{advanced_cairns2}
  For any integers $d\ge 2$, $\ell\ge 3$, $\Delta\ge d$ and any
  directed graph $G'$ of maximum degree $\Delta$ and maximum out-degree $d$, there exists a mapping $\gamma:\widehat{\mathcal{P}}_\ell(G')\to V(G')$ such that
  \begin{compactenum}[(i)]
    \item $\gamma(\Pi)\in V(\Pi)$ for each $\Pi\in\mathcal{P}$; and
    \item $|\gamma^{-1}(v)| \le 2^{\ell}d^{\lceil \ell/2\rceil+1}\Delta^{\lfloor\ell/2\rfloor-1}$ for each $v\in V(G')$.
  \end{compactenum}
\end{lem}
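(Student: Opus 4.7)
I will adapt the proof of \cref{advanced_cairns} by mapping each path $\Pi=x_0,\ldots,x_r\in\widehat{\mathcal{P}}_\ell(G')$ to $\gamma(\Pi):=x_1$ rather than to an endpoint. The motivation is that, by the definition of $\widehat{\mathcal{P}}_\ell(G')$, the first edge of $\Pi$ is directed $x_1\to x_0$ in $G'$, so $x_0$ is an out-neighbour of $x_1$; since $x_1$ has at most $d$ out-neighbours, this means $x_0$ can be recovered from only $d$ candidates instead of up to $\Delta$. Combined with the fact that the last edge $x_{r-1}\to x_r$ is also forced to be an out-edge (of $x_{r-1}$), this trades one $\Delta$-factor for a $d$-factor relative to \cref{advanced_cairns}, which is precisely the improvement promised by the statement.

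The execution has two moving parts. First, as in \cref{advanced_cairns}, for each $\Pi$ I choose a direction of traversal: because both endpoints of $\Pi$ are sinks of their incident edge, both orderings $x_0,\ldots,x_r$ and $x_r,\ldots,x_0$ are valid elements of $\widehat{\mathcal{P}}_\ell(G')$, and the bit sequence $(b_2,\ldots,b_{r-1})$ recording which \emph{interior} edges are downstream is bit-flipped between the two. I pick the ordering in which at least $\lceil(r-2)/2\rceil$ interior bits are downstream, breaking ties deterministically. Second, for a fixed vertex $v$ I bound the number of length-$r$ paths in $\gamma^{-1}(v)$ by reconstructing each such preimage from $v$ via: (a) a choice of $x_0\in N^+_{G'}(v)$ ($\le d$ options); (b) a choice of the $r-2$ interior bits together with the neighbours selected along the walk $v=x_1\to x_2\to\cdots\to x_{r-1}$ (at most $2^{r-2}\cdot d^{\lceil(r-2)/2\rceil}\Delta^{\lfloor(r-2)/2\rfloor}$ options, using that at least half the interior bits are downstream and that $d\le\Delta$); and (c) a choice of $x_r\in N^+_{G'}(x_{r-1})$ ($\le d$ options). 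Applying the identities $\lceil(r-2)/2\rceil+2=\lceil r/2\rceil+1$ and $\lfloor(r-2)/2\rfloor=\lfloor r/2\rfloor-1$, this yields at most $2^{r-2}d^{\lceil r/2\rceil+1}\Delta^{\lfloor r/2\rfloor-1}$ length-$r$ preimages of $v$.

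Summing over $r\in\{2,\ldots,\ell\}$ finishes the proof. Consecutive terms grow by a factor of $2d$ or $2\Delta$, both at least $4$ since $d,\Delta\ge 2$, so the sum is dominated by its last term up to a multiplicative factor of at most $4/3$; this gives $|\gamma^{-1}(v)|\le 2^{\ell}d^{\lceil\ell/2\rceil+1}\Delta^{\lfloor\ell/2\rfloor-1}$, as required. I do not anticipate any substantive obstacle: essentially all that remains is careful bookkeeping of the floor/ceiling exponents and the tie-breaking rule for the traversal direction when $r-2$ is even and both orderings realise exactly $(r-2)/2$ downstream interior bits.
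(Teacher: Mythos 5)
Your proof is correct and follows essentially the same route as the paper: map $\Pi$ to $x_1$, exploit that the first edge is an out-edge of $x_1$ and the last edge is an out-edge of $x_{r-1}$ (so each costs a factor $d$ rather than $\Delta$), and pick the traversal direction so that at most $\lfloor(r-2)/2\rfloor$ of the $r-2$ interior edges are upstream. Two minor points worth noting. First, your sum runs over $r\in\{2,\ldots,\ell\}$, but $\widehat{\mathcal{P}}_\ell(G')$ also contains length-$1$ paths $x_0,x_1$ when the edge $x_0x_1$ is bidirectional in $G'$; these contribute at most $d$ preimages per vertex and are comfortably absorbed by the slack already in your geometric-series bound, so this is easy to patch. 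Second, the claim that the interior bit sequence is exactly bit-flipped under reversal of the traversal direction is not literally true when an interior edge is bidirectional in $G'$ (the paper explicitly declares such edges downstream in \emph{both} directions); however this only increases the number of downstream bits, so the inequality you need — at least $\lceil(r-2)/2\rceil$ downstream interior edges in some direction — still holds, and the counting goes through unchanged.
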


\begin{proof}
  The proof is almost identical to \cref{advanced_cairns2} with two modifications.  If $G'$ contains both edges $\overrightarrow{vw}$ and $\overrightarrow{wv}$ then this edge is considered as a downstream edge no matter which direction it is traversed, since there are at most $d$ options for edges leaving $v$ (one of which is $w$) and at most $d$ options for edges leaving $w$ (one of which is $v$).

  For an undirected path $\Pi:=v_0,\ldots,v_r\in\widehat{\mathcal{P}}_\ell(G')$, the total number of upstream edges in the path $v_1,\ldots,v_r$ and in the path $v_{r-1},\ldots,v_0$ is at most $r-2$. (The edge $v_{r-1}v_r$ is a downstream edge in the first path, $v_1v_0$ is a downstream edge in the second path, and each of the edges in $v_1,\ldots,v_r$ is upstream in at most one of the two paths.)  Therefore we can choose the endpoint $v_0$ so that the number of upstream edges in $v_1,\ldots,v_r$ is at most $\lfloor (r-2)/2\rfloor=\lfloor r/2\rfloor-1$ and set $\gamma(\Pi):=v_1$. For $r\ge 3$, $\Pi$ can then be obtained by taking the union of the paths $x_1,x_0$ and $x_1,x_2,\ldots,x_r$.  The first path consists of one downstream edge, so there are at most $d$ options for the first path.  The second path has $r-1$ edges and at most $\lfloor r/2\rfloor-1$ of these are upstream edges, so there are at most $2^{r-1}d^{\lceil r/2\rceil}\Delta^{\lfloor r/2\rfloor-1}$ options for the second path.
   Thus, the number of paths of length $r$ assigned to any vertex is at most $2^{r-1}d^{\lceil r/2\rceil+1}\Delta^{\lfloor r/2\rfloor-1}$ for $r\ge 3$ (and at most $d^r$ for $r\in\{1,2\}$). Again, the proof finish by summing over $r$ in $1,\ldots.\ell$.
\end{proof}

\section{The Proof}
\label{the_proof}

For a vertex colouring $\varphi:V(G)\to\N$ of a graph $G$, we say that an undirected path $\Pi:=x_0,\ldots,x_r$ in $G$ is an \defin{$\ell$-violation} of $\varphi$ if $\Pi$ has length $r\le\ell$ and $\varphi(x_0)=\varphi(x_r)=\max\{\varphi(x_0),\ldots,\varphi(x_r)\}$.  Observe that $\varphi$ is an $\ell$-vertex-ranking if and only if $G$ contains no $\ell$-violations of $\varphi$.

\begin{proof}[Proof of \cref{l_degenerate_and_degree}]
  Let $G$ be an $n$-vertex $d$-degenerate graph of maximum-degree $\Delta$.  Let $S_0:=V(G)$ and, for each integer $i\ge 1$, let $S_i:=\{v\in S_{i-1}:\deg_{G[S_{i-1}]}(v)\ge 4d\}$.  Since $G$ is $d$-degenerate $G[S_{i-1}]$ has at most $d|S_{i-1}|$ edges.  Therefore
  \[
    2d|S_{i-1}|\ge \sum_{v\in S_{i-1}} \deg_{G[S_{i-1}]}(v)\ge 4d|S_i| \enspace ,
  \]
  so $|S_i|\le |S_{i-1}|/2\le n/2^i$ for each $i\ge 1$.  Let $q$ be the maximum integer such that $S_q$ is non-empty.  Since $1\le |S_q|\le n/2^q$, $q\le \log_2 n$.  For each $i\in\{0,\ldots,q\}$, let $L_i:=S_i\setminus S_{i+1}$.  (These notations are mnemonics: $S_i$ contains the \defin{survivors} of round $i-1$ and $L_i$ is \defin{layer} $i$.)

  Let $b=1/4$ and let $k:=\Delta^{\lfloor\ell/2\rfloor-1/2}\log^b n$, so that our goal is to find an $\ell$-vertex-ranking of $G$ using $O(k\log n)$ colours. We compute our colouring using a two phase algorithm. In the first phase we use a sequence of pairwise-disjoint colour palettes $\Phi_0,\ldots,\Phi_{q}$, each of size $2k$, such that for each $1\le i < j\le q$, every colour in $\Phi_i$ is less than every colour in $\Phi_j$.  We will use the colours in $\Phi_i$ to colour the vertices in $L_i$, for each $i\in\{0,\ldots,q\}$.  The total number of colours used in this phase is $2k(q+1)\le 2k(1+\log n)= O(k\log n)$.  The first phase colouring $\varphi:V(G)\to\Phi_0\cup\cdots\cup\Phi_q$ may have some $\ell$-violations that will be eliminated by re-colouring some vertices in the second phase using an additional palette $\Phi_{q+1}$ of size $O(k\log n)$.

  Let $\mathcal{P}$ contain all the undirected paths $x_0,\ldots,x_r$ in $\mathcal{P}_{\ell}(G)$ such that $x_0$ and $x_r$ are in the same layer $L_j$ and $\{x_1,\ldots,x_{r-1}\}\subseteq \bigcup_{i=0}^j L_i$.  Since $G$ is $d$-degenerate, it has an orientation of maximum out-degree $d$.  Let $\rho:\mathcal{P}\to V(G)$ be the mapping given by \cref{advanced_cairns}, applied to $G$, restricted to the paths in $\mathcal{P}$. (The purpose of the restriction is so that $\rho^{-1}(v)$ denotes a subset of $\mathcal{P}$.)  We will say that an undirected path $\Pi:=x_0,\ldots,x_r$ in $\mathcal{P}$ is \defin{problematic} if $x_0$ and $x_r$ are assigned the same colour in the first phase of the algorithm.

  Although we have not yet completed the description of the first phase colouring procedure, we already know enough to establish the following claim:  Any $\ell$-violation $\Pi$ of the first phase colouring $\varphi$ is a problematic path in $\mathcal{P}$.  Indeed, if $\Pi=x_0,\ldots,x_r$ is an $\ell$-violation, then $\varphi(x_0)=\varphi(x_r)\in \Phi_j$ for some $j\in\{0,\ldots,q\}$.  Thus $x_0,x_r\in L_j$ for some $j\in\{0,\ldots,q\}$.  Furthermore, since $\Pi$ is an $\ell$-violation $\varphi(x_0)=\max\{\varphi(x_0),\ldots,\varphi(x_r)\}$, so no colour in $\Phi_{j+1},\ldots,\Phi_q$ appears at any vertex in $x_1,\ldots,x_{r-1}$.  Therefore, $\{x_1,\ldots,x_{r-1}\}\subseteq \bigcup_{i=0}^j L_i$.  Therefore $\Pi\in\mathcal{P}$.  Since $\varphi(x_0)=\varphi(x_r)$, $\Pi$ is problematic.

  We now describe the first phase colouring algorithm.  Consider the multigraph $G^*$ that, for each $vw\in V(G)$ contains as many copies of the edge $vw$ as there are undirected paths in $\mathcal{P}$ with endpoints $v$ and $w$.  The existence of $\rho$ implies that $G^*$ has an orientation in which each vertex has out-degree $O(\Delta^{\lfloor\ell /2\rfloor})$.  By \cref{orientation_to_degeneracy}, $G^*$ is $O(\Delta^{\lfloor\ell /2\rfloor})$-degenerate.  Label the vertices of $G$ as $v_1,\ldots,v_n$ so that $v_i$ has degree $O(\Delta^{\lfloor\ell /2\rfloor})$ in $G^*[\{v_1,\ldots,v_{i}\}]$. In other words, for each $a\in\{1,\ldots,n\}$ the number of paths in $\mathcal{P}$ with one endpoint at $v_a$ and the other endpoint in $\{v_1,\ldots,v_{a-1}\}$ is $O(\Delta^{\lfloor\ell /2\rfloor})$.  In the first phase, we will colour the vertices one by one in the order $v_1,\ldots,v_n$.

  Let $\tau:\mathcal{P}\to V(G)$ be the mapping that maps $\Pi\in\mathcal{P}$ to $v_b$ if and only if the endpoints of $\Pi$ are $v_a$ and $v_b$ and $a < b$.\footnote{The two mappings $\tau$ and $\rho$ are similar. The difference is that $\rho$ corresponds to some orientation of $G^*$ and $\tau$ corresponds to an acyclic orientation of $G^*$.}  Consider some vertex $w\in L_j$.  For each $\alpha \in \Phi_j$, let $N_{\alpha}(w)$ be the number of paths in $\tau^{-1}(w)$ whose other endpoint (not $w$) is assigned the colour $\alpha$. If $w=v_b$ then $N_{\alpha}(w)$ is completely determined by the colours of $v_1,\ldots,v_{b-1}$, so the value of $N_\alpha(v_b)$ is determined after $v_{b-1}$ is coloured but before $v_b$ is coloured.  Observe that assigning the colour $\alpha$ to $w$ creates exactly $N_{\alpha}(w)$ problematic paths, and these are all in $\tau^{-1}(w)$.  Therefore, $\sum_{\alpha\in\Phi_j} N_\alpha(w)=|\tau^{-1}(w)|\in O(\Delta^{\lfloor\ell /2\rfloor})$.

  For each vertex $w$, we choose the colour of $w$ uniformly at random from a subpalette of $\Phi_i$ that contains exactly half of the $2k$ colours in $\Phi_i$.  Specifically, we choose the colour of $w$ from a palette $\Phi(w)\subset \Phi_i$ that contains $k$ colours $\alpha$ in $\Phi_i$ with the smallest $N_\alpha(w)$ values, so that  $\max\{N_\alpha(w):\alpha\in \Phi(w)\}\le\min\{N_\alpha(w):\alpha\in\Phi_i\setminus\Phi(w)\}$.  Let $M:=c\Delta^{\lfloor\ell/2\rfloor}/k$ with $c$ sufficiently large so that $Mk\ge \tau^{-1}(w)$.
  Then
  \[
    Mk \ge |\tau^{-1}(w)|= \sum_{\alpha\in \Phi_i} N_\alpha(w) \ge \sum_{\alpha\in\Phi_i\setminus\Phi(w)}N_\alpha(w) \ge k\max\{N_\alpha(w):\alpha\in \Phi(w)\} \enspace .
  \]
  Therefore, $\max\{N_\alpha(w):\alpha\in \Phi(w)\}\le M$ is the maximum number of problematic paths in $\mathcal{P}$ that can be created by colouring $w$, and $M$ is the maximum number of problematic paths that can be created by colouring any single vertex.  This completes the description of the first-phase colouring $\varphi$ of $G$.

  The first phase colouring $\varphi$ is not yet an $\ell$-vertex-ranking.  Our goal now is to study the problematic paths in $\mathcal{P}$, each of which is a potential $\ell$-violation of $\varphi$.  For each problematic $\Pi\in\mathcal{P}$, we will choose a vertex $y$ of $\Pi$ to recolour in the second round to eliminate the possibility that $\Pi$ is an $\ell$-violation. of $\varphi$.  Consider the directed graph $G'$ obtained from $G$ by adding each edge $\overrightarrow{vw}$ if $vw\in E(G)$, $v\in L_i$, $w\in L_j$ and $i\le j$.  (Note that if $v$ and $w$ are in the same layer $L_j$ then both edges $\overrightarrow{vw}$ and $\overleftarrow{vw}$ are present in $G'$.)  Then $G'$ has maximum out-degree at most $4d$ and $\widehat{\mathcal{P}}_\ell(G')$ contains $\mathcal{P}$. Let $\gamma:\mathcal{P}\to V(G)$ be the result of applying \cref{advanced_cairns2} to $G'$ (and then restricting it to the paths in $\mathcal{P}$.)  For each problematic path $\Pi\in \mathcal{P}$, we call the vertex $\gamma(\Pi)$ \defin{problematic}.  We will recolour $\gamma(\Pi)$ in order to avoid the potential $\ell$-violation at $\Pi$.

  Let $P$ be the set of all problematic vertices.  In the second phase, we recolour every vertex in $P$ with a colour in a palette $\Phi_{q+1}$ of size $ck\log n + 1$ whose colours are all larger than all colours in $\Phi_0,\ldots,\Phi_q$.  Since each $\ell$-violation of $\varphi$ contains a vertex in $P$, this recolouring eliminates all the existing $\ell$-violations in $\varphi$.  More precisely, after this recolouring any remaining $\ell$-violation must have both endpoints whose colour is in $\Phi_{q+1}$.  So that this never happens, we will ensure that our recolouring is a proper colouring of $G^\ell[P]$.  By definition, this means that any path in $\mathcal{P}_\ell(G)$ with both endpoints in $P$ has endpoints of different colours and is therefore not an $\ell$-violation.

  By \cref{degeneracy_of_g_l}, $G^\ell$ is $O(\Delta^{\lfloor\ell /2\rfloor})$-degenerate.  Let $H$ be a directed acyclic graph obtained from $G^{\ell}$ in which each vertex has out-degree $O(\Delta^{\lfloor\ell /2\rfloor})$. We want to show that $G^\ell[P]$ has chromatic number at most $ck\log n+1$.
  To do this, we will show that the maximum out-degree of $H[P]$ is at most $ck\log n$, with high probability.  In fact, we will show something stronger: that every vertex $p$ in $H$ has at most $ck\log n$ out-neighbours in $P$.

  From this point on, we fix some vertex $p$ of $H$ and study the random variable $|N_H^+(p)\cap P|$.  Instead of focusing on the set $P$ of problematic vertices, we focus instead on problematic paths.  For each problematic vertex $y$, some path $\Pi$ in $\gamma^{-1}(y)$ is problematic. Therefore,
  \begin{equation}
    |N^+_H(p)\cap P| \le \sum_{y\in N^+_H(p)} \left|\left\{\Pi\in\gamma^{-1}(y): \text{$\Pi$ is problematic}\right\}\right| =: X'_{p} \enspace .
    \label{path_upper_bound}
  \end{equation}
  Each path $\Pi\in\mathcal{P}$ is problematic with probability at most $1/k$ since $\Pi$ becomes problematic precisely when we choose the same colour for $w:=\tau(\Pi)$ that was already chosen for the other endpoint of $\Pi$.  Therefore,
  \begin{align*}
    \E\left(|N^+_H(p)\cap P|\right)
    & \le \frac{1}{k}\sum_{y\in N^+_H(p)}|\gamma^{-1}(y)| \\
    & \le \frac{1}{k}\cdot|N^+_H(p)|\cdot O(\Delta^{\lfloor\ell /2\rfloor-1}) \\
    & \le \frac{1}{k}\cdot O(\Delta^{2\lfloor\ell /2\rfloor-1}) \\
    & = O(\Delta^{\lfloor\ell /2\rfloor-1/2}\log^{-b} n) = O(k\log^{-2b} n) \enspace .
  \end{align*}
  This is a good sign. If the event set $\{\text{``$\Pi$ is problematic''}:\Pi\in\mathcal{P}\}$ were mutually independent then it would be a simple matter of applying a Chernoff bound.  Unfortunately this is not the case since, for each vertex $w$, all of the events in $\{\text{``$\Pi$ is problematic''}:\Pi\in\tau^{-1}(w)\}$ are all affected by the choice of colour for $w$.

  The remainder of the proof is more probability than graph theory.  We will use a tail estimate for sums of independent random variables due to Bernstein that is applicable when these random variables have sufficiently small variance.  The statement of Bernstein's Inequality and the calculations needed to apply it in this context are deferred to the next section. We use the rest of our time here to describe a random variable $X_p$ that stochastically dominates $X'_p\ge |N^+_H(p)\cap P|$ and is a sum of independent random variables.\footnote{A random variable $X$ \defin{stochastically dominates} a random variable $Y$ if $\Pr(X\ge x) \ge \Pr(Y\ge x)$ for all $x\in\R$.}

  For each vertex $w$ of $G$, let $\mathcal{P}_{p,w}:=\tau^{-1}(w)\cap (\bigcup_{y\in N^+_H(p)}\gamma^{-1}(y))$ and define
  \[
    X'_{p,w} :=\left|\left\{\Pi\in\mathcal{P}_{p,w}:\text{$\Pi$ is problematic}\right\}\right|
    \enspace .
  \]
  Since each problematic path $\Pi$ counted by the right-hand-side of \cref{path_upper_bound} becomes problematic when $w:=\tau(\Pi)$ is coloured, $X'_p=\sum_{w\in V(G)} X'_{p,w}$.

  Suppose $w\in L_j$ for some $j\in\{0,\ldots,q\}$.  For each $\alpha\in\Phi_j$, let $N_\alpha(p,w)$ be the number of paths in $\mathcal{P}_{p,w}$ that would have become problematic if we had set the colour of $w$ to $\alpha$. Then $\sum_{\alpha\in\Phi(w)} N_\alpha(p,w) \le \sum_{\alpha\in\Phi_j} N_\alpha(p,w) \le \sum_{\alpha\in\Phi_j} N_\alpha(w) = |\tau^{-1}(w)|$.

  Let $\alpha_1,\ldots,\alpha_k$ be the colours in $\Phi(w)$ ordered so that
  \[
    N_{\alpha_1}(p,w) \ge N_{\alpha_2}(p,w) \ge\cdots\ge N_{\alpha_k}(p,w) \enspace .
  \]
  Then, for each $i\in\{1,\ldots,k\}$, $iN_{\alpha_i}(p,w)\le\sum_{j=1}^i N_{\alpha_j}(p,w) \le |\tau^{-1}(w)|$, so
  \[
    N_{\alpha_i}(p,w)\le \frac{|\tau^{-1}(w)|}{i}=\frac{O(\Delta^{\lfloor\ell/2\rfloor})}{i} \enspace .
  \]
  Therefore, regardless of any random choices made before choosing the colour of $w$ and any random choices made after choosing the colour of $w$, the random variable $X'_{p,w}$ is dominated by a random variable $X_{p,w}:=\min\{O(\Delta^{\lfloor\ell/2\rfloor})/i,M\}$ where $i$ is chosen uniformly in $\{1,\ldots,k\}$.

  Therefore, $|N_H^+(p)\cap P|$ is dominated by a sum $X_p:=\sum_{w} X_{p,w}$ of mutually independent random variables.  In order to apply a concentration result to the random variable $X_p$, we need to  establish sufficiently strong properties on the individual terms $X_{p,w}$.  In the next section, we bound the expectation and variance of each $X_{p,w}$ so that we can apply Bernstein's Inequality to prove that $\Pr(X_p\ge ck\log n)\le n^{-\Omega(c)}$.  Then the union bound implies that $\Pr(\max\{X_p:p\in V(H)\} \ge ck\log n)\le n^{-\Omega(c)}$.  Thus, with high probability, the number of additional colours in $\Phi_{q+1}$ needed to recolour the vertices of $P$ in the second phase is $O(k\log n)$.  Since the total number of colours used in the first phase is $O(k\log n)$, we have
  \[
    \lrn(G) = O(k \log n) = O(\Delta^{\lfloor\ell/2\rfloor-1/2}\log^{5/4} n) \enspace . \qedhere
  \]
\end{proof}

With the proof of \cref{l_degenerate_and_degree} out of the way, we now show how it implies \cref{l_d_degenerate_upper_bound}.

\begin{proof}[Proof of \cref{l_d_degenerate_upper_bound}]
  Since $G$ is $d$-degenerate, it has at most $dn$ edges and the sum of its vertex degrees is at most $2dn$.  Let $\Delta:=n^{1/(\lfloor\ell/2\rfloor+1/2)}\log^{-x} n$ for some value $x$ to be discussed shortly.  Then the set $S:=\{v\in V(G):\deg_G(v)\ge \Delta\}$ has size at most $2dn/\Delta=2dn^{\texp}\log^x n$.  Since $G-S$ is $d$-degenerate and has maximum degree $\Delta$, \cref{l_degenerate_and_degree} implies that
  \begin{align*}
    \lrn(G-S) &
    \le c\Delta^{\lfloor\ell/2\rfloor-1/2}\log^{5/4} n \\
    & = cn^{\frac{\lfloor\ell/2\rfloor-1/2}{\lfloor\ell/2\rfloor+1/2}}\log^{5/4-x(\lfloor\ell/2\rfloor-1/2)} n \\
    & = cn^{\dexp}\log^{5/4-x(\lfloor\ell/2\rfloor-1/2)} n \\
    & \le cn^{\dexp}\log^{5/4-x/2} n \\
  \end{align*}
  where the last inequality follows from the fact that $\lfloor\ell/2\rfloor-1/2\ge 1/2$ for all $\ell\ge 2$.  Taking $x:=5/6$, we get $|S|=O(n^{\texp}\log^{5/6} n)=O(n^{\texp}\log n)$ and  $\lrn(G-S)= O(n^{\texp}\log^{5/4-5/12} n)=O(n^{\texp}\log^{5/6} n)$.  We can extend $\varphi$ to a colouring of $G$ by assigning each vertex in $S$ a distinct colour that is larger than every colour used in the colouring of $G-S$.  Thus, $\trn(G)\le |S|+\lrn(G-S) \le O(n^{\texp}\log^{5/6} n)$, which establishes \cref{l_d_degenerate_upper_bound}.  (Note that this argument actually proves a slightly better bound than \cref{l_d_degenerate_upper_bound}.  For $\ell\ge 4$, further improvements to the logarithmic factor in \cref{l_d_degenerate_upper_bound} can be obtained using the ``Furthermore'' clause of \cref{l_degenerate_and_degree}.)
\end{proof}

\section{Bounding the Tail of \boldmath $X_p$}
\label{probability}

We make use of the following inequality of Bernstein \cite[Corollary~2.11]{boucheron.lugosi.ea:concentration}:

\begin{thm}\label{bernstein_theorem}
  Let $M$ be a positive number, let $X_1,\ldots,X_r$ be independent random variables such that $0\le X_i\le M$ for each $i\in\{1,\ldots,r\}$, and let $X:=\sum_{i=1}^r X_i$. Then
  \begin{equation}
    \Pr\left(X \ge \E(X)+ t\right)
      \le \exp\left(\frac{\tfrac{1}{2}t^2}{\sum_{i=1}^r \E((X_i-\E(X_i))^2)+\tfrac{1}{3}Mt}\right) \enspace . \label{bernstein}
  \end{equation}
\end{thm}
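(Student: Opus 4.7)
The plan is to prove this by the classical Chernoff--Cram\'er method: control the moment-generating function (MGF) of each summand and then apply the exponential Markov inequality with an optimized parameter. Set $Y_i:=X_i-\E(X_i)$, $\sigma_i^2:=\E(Y_i^2)$, and $\sigma^2:=\sum_{i=1}^r \sigma_i^2$. For any $\lambda>0$, exponential Markov combined with independence of the $Y_i$ gives
\[
  \Pr(X-\E(X)\ge t) \le e^{-\lambda t}\,\E\bigl(e^{\lambda(X-\E(X))}\bigr) = e^{-\lambda t}\prod_{i=1}^r \E\bigl(e^{\lambda Y_i}\bigr).
\]

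The heart of the argument is the single-variable MGF estimate
\[
  \E\bigl(e^{\lambda Y_i}\bigr) \le \exp\!\left(\frac{\lambda^2 \sigma_i^2/2}{1-\lambda M/3}\right) \qquad\text{for } 0<\lambda<3/M.
\]
To derive this I would expand $e^{\lambda Y_i}=1+\lambda Y_i+\sum_{k\ge 2}\lambda^k Y_i^k/k!$, take expectations (the linear term vanishes because $\E(Y_i)=0$), and apply the crude bound $|Y_i|^k\le M^{k-2} Y_i^2$, valid because $|Y_i|\le M$. This gives
\[
  \E\bigl(e^{\lambda Y_i}\bigr) \le 1 + \sigma_i^2\sum_{k\ge 2} \frac{\lambda^k M^{k-2}}{k!} \le 1+ \frac{\lambda^2\sigma_i^2/2}{1-\lambda M/3},
\]
where the last step uses the elementary inequality $k!\ge 2\cdot 3^{k-2}$ for $k\ge 2$ to sum the resulting geometric tail; then $1+x\le e^x$ converts the right-hand side into the claimed exponential.

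Multiplying over $i$ and substituting back into the Chernoff bound yields
\[
  \Pr(X-\E(X)\ge t) \le \exp\!\left(-\lambda t + \frac{\lambda^2 \sigma^2/2}{1-\lambda M/3}\right).
\]
The remaining step is a one-variable optimization over $\lambda\in(0,3/M)$: choosing $\lambda:=t/(\sigma^2+Mt/3)$ (which lies in the admissible range) collapses the exponent to $-t^2/(2(\sigma^2+Mt/3))$, matching the bound in the statement (read as $\exp(-\,\cdot\,)$, as is standard for such concentration inequalities). The main technical obstacle is the tight MGF estimate with the denominator $1-\lambda M/3$; once the combinatorial bound $k!\ge 2\cdot 3^{k-2}$ is in hand, everything else is routine bookkeeping and calculus.
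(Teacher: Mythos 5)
Your Cram\'er--Chernoff derivation is correct. The centered variables $Y_i:=X_i-\E(X_i)$ do satisfy $|Y_i|\le M$ (since $0\le X_i\le M$ forces $\E(X_i)\in[0,M]$), so the pointwise bound $|Y_i|^k\le M^{k-2}Y_i^2$ is legitimate, the elementary inequality $k!\ge 2\cdot 3^{k-2}$ for $k\ge 2$ sums the tail into a geometric series, and substituting $\lambda:=t/(\sigma^2+Mt/3)$ (which indeed lies in $(0,3/M)$) collapses the exponent to $-t^2/\bigl(2(\sigma^2+Mt/3)\bigr)$. Be aware, though, that the paper does not \emph{prove} this statement; it is quoted from Boucheron, Lugosi, and Massart (Corollary~2.11), so there is no internal proof to compare against. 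That textbook reaches this corollary by a slightly different route: it first proves Bennett's inequality via the sharper single-variable MGF bound $\E(e^{\lambda Y_i})\le\exp\bigl(\sigma_i^2(e^{\lambda M}-\lambda M-1)/M^2\bigr)$ and then deduces the Bernstein form from the scalar inequality $(1+u)\log(1+u)-u\ge u^2/(2+2u/3)$; your direct power-series argument with $k!\ge 2\cdot 3^{k-2}$ is shorter and self-contained, at the cost of bypassing the (stronger) Bennett intermediate. Finally, you are right to flag the sign: the bound as printed in the paper is missing a minus in the exponent (as written it is vacuously true) and must be read as $\exp(-\,\cdot\,)$.
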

We will apply \cref{bernstein_theorem} to a random variable $X:=\sum_{i=1}^r X_i$ in which each $X_i$ has the following distribution (for some $0\le x_i\le kM$):
\[
  X_i = \begin{cases}
          M & \text{with probability $(1/k)\lfloor x_i/M\rfloor$} \\
          x_i/j & \text{with probability $1/k$ for each $j\in\{\lfloor x_i/M\rfloor+1,\ldots,k\}$}
        \end{cases}
\]
This is the distribution we get when we choose a uniform $j\in\{1,\ldots,k\}$ and set $X_i:=\min\{M,x_i/j)$.  To see how this applies in the proof of \cref{l_degenerate_and_degree}, let $r:=n$, let $\{w_1,\ldots,w_n\}:=V(G)$ and, for each $i\in\{1,\ldots,n\}$, let $x_i:=|\tau^{-1}(w_i)\cap\left(\bigcup_{y\in N^+_H(p)}\gamma^{-1}(y)\right)|$.  In our setting $k=\Delta^{\lfloor\ell/2\rfloor-1/2}\log^b n$ for some $b\ge 0$,  $M=a\Delta^{\lfloor\ell/2\rfloor}/k$ for some constant $a$, and $t=ck\log n$ for some (sufficiently large) constant $c$.  The rest of this appendix is devoted to bounding the various quantities that appear in \cref{bernstein} so that we can show that the right-hand side of \cref{bernstein} is $n^{-\Omega(c)}$.

Both the maximum value and the sum of $x_1,\ldots,x_n$ are important for us. For the maximum, we have $x_i\le|\tau^{-1}(w_i)|= O(\Delta^{\lfloor\ell/2\rfloor})$ for all $i\in\{1,\ldots,n\}$.  We have already bounded the sum when computing the expectation of $|N_H^+(p)\cap P|$ as follows:
\[
  \sum_{i=1}^n x_i = \sum_{y\in N^+_H(p)} |\gamma^{-1}(y)|
  \le |N^+_H(p)|\cdot O(\Delta^{\lfloor\ell/2\rfloor-1})
  \le O(\Delta^{2\lfloor\ell/2\rfloor-1}) \enspace .
\]

For each $i\in\{1,\ldots,n\}$, we have
\begin{align*}
  \E(X_i)
  & =\Pr(X_i=M)\cdot M + \sum_{j=\lfloor x_i/M\rfloor+1}^k \Pr(X_i=x_i/j)\cdot\frac{x_i}{j} \\
  & \le\frac{x_i}{kM}\cdot M + \frac{1}{k}\cdot\sum_{j=\lfloor x_i/M\rfloor+1}^k \frac{x_i}{j} \\
  & \le\frac{x_i}{k} + \frac{1}{k}\cdot\sum_{j=1}^k \frac{x_i}{j} \\
  & \le \frac{x_i(2+\ln k)}{k} & \text{(since $\sum_{j=1}^k 1/k\le 1+\ln k$)} \\
  & = O\left(\frac{x_i\log k}{k}\right) \\
  & = O\left(\frac{x_i\log n}{k}\right)
  \enspace ,
\end{align*}
where the last line comes from the fact that $d,\Delta \le n$ and $\ell\in O(1)$.
Therefore,
\begin{align}
  (\E(X_i))^2
  & = O\left(\left(\frac{x_i\log n}{k}\right)^2\right) \nonumber \\
  & = O\left(\frac{x_i^2\log^2 n}{k^2}\right) \nonumber \\
  & = O\left(\frac{x_i^2\log^2 n}{\Delta^{2\lfloor\ell/2\rfloor-1}\log^{2b} n}\right) \nonumber \\
  & = O\left(\frac{x_i\log^{2-2b} n}{\Delta^{\lfloor\ell/2\rfloor-1}}\right)
   & \text{(since $x_i=O(\Delta^{\lfloor\ell/2\rfloor})$)} \label{square}
\end{align}
Now,
\begin{equation}
  \E((X_i-\E(X_i))^2)  = \frac{1}{k}\left\lfloor\frac{x_i}{M}\right\rfloor\cdot(M-\E(X_i))^2 + \sum_{j=\lfloor x_i/M\rfloor+1}^k \frac{1}{k}\left(\frac{x_i}{j}-\E(X_i)\right)^2 \label{variance}
\end{equation}
We bound the first term on the right hand side of \cref{variance} as follows:
\begin{align*}
  \frac{1}{k}\left\lfloor\frac{x_i}{M}\right\rfloor\cdot(M-\E(X_i))^2
  & \le \frac{x_i}{kM}\left(M^2 + (\E(X_i))^2\right) \\
  & = \frac{M x_i}{k} + \frac{x_i}{kM}\cdot (\E(X_i))^2 \\
  & \le \frac{M x_i}{k} + (\E(X_i))^2
  & \text{(since $x_i=|\tau^{-1}(w_i)|\le Mk$)}\\
  & = O\left(\frac{\Delta^{\lfloor\ell/2\rfloor}x_i}{k^2}\right) + (\E(X_i))^2
  & \text{(by the definition of $M$)}\\
  & = O\left(\frac{x_i\log^{-2b} n}{\Delta^{\lfloor\ell/2\rfloor-1}}\right) + (\E(X_i))^2
  & \text{(by the definition of $k$)}\\
  & = O\left(\frac{x_i\log^{2-2b} n}{\Delta^{\lfloor\ell/2\rfloor-1}}\right)
  & \text{(by \cref{square}, since $2-2b\ge -2b$)} \enspace .
\end{align*}
We bound the remaining terms in \cref{variance} as follows:
\begin{align*}
 \sum_{j=\lfloor x_i/M\rfloor+1}^k \frac{1}{k}\left(\frac{x_i}{j}-\E(X_i)\right)^2
  & \le \sum_{j=\lfloor x_i/M\rfloor+1}^k \frac{1}{k}\left(\frac{x_i^2}{j^2}+(\E(X_i))^2\right) \\
  & \le (\E(X_i))^2+\frac{1}{k}\sum_{j=\lfloor x_i/M\rfloor+1}^k \frac{x_i^2}{j^2} \\
  & \le (\E(X_i))^2 +  \frac{1}{k}\cdot\frac{M\pi^2x_i^2}{6 x_i}
  & \text{(since $\sum_{j=z}^{\infty} \tfrac{1}{j^2} \le \tfrac{\pi^2}{6z}$ for $z\ge 1$)} \\
  & = (\E(X_i))^2 + O\left(\frac{Mx_i}{k}\right) \\
  & = O\left(\frac{x_i\log^{2-2b} n}{\Delta^{\lfloor\ell/2\rfloor-1}}\right)
  & \text{(as above)} \enspace .
\end{align*}
Therefore
\[
  \E((X_i-\E(X_i))^2) = O\left(\frac{x_i\log^{2-2b} n}{\Delta^{\lfloor\ell/2\rfloor-1}}\right)
\]
for all $i\in\{1,\ldots,n\}$.  Recall that $\sum_{i=1}^n x_i = O(\Delta^{2\lfloor\ell/2\rfloor-1})$, so
\[
  \E(X)
  = \sum_{i=1}^n \E(X_i)
  = \sum_{i=1}^n O\left(\frac{x_i\log n}{k}\right)
  = O\left(\frac{\Delta^{2\lfloor\ell/2\rfloor-1}\log n}{k}\right)
  = O\left(\Delta^{\lfloor\ell/2\rfloor-1/2}\log^{1-b} n\right) \enspace .
\]
and
\[
  \sum_{i=1}^r\E((X_i-\E(X_i))^2)
  = \sum_{i=1}^r O\left(\frac{x_i\log^{2-2b} n}{\Delta^{\lfloor\ell/2\rfloor-1}}\right)
  = O\left(\frac{\Delta^{2\lfloor\ell/2\rfloor-1}\log^{2-2b} n}{\Delta^{\lfloor\ell/2\rfloor-1}}\right)
  = O\left(\Delta^{\lfloor\ell/2\rfloor}\log^{2-2b} n\right)
  \enspace .
\]
Then \cref{bernstein} gives:
\begin{align*}
  \Pr(X\ge \E(X)+t)
  & = \Pr(X\ge \E(X)+ck\log n) \\
  & \le \exp \left(-\frac{\tfrac{1}{2}(ck\log n)^2}{O\left(\Delta^{\lfloor\ell/2\rfloor}\log^{2-2b} n\right) + \tfrac{1}{3}Mck\log n}\right) \\
  & = \exp \left(-\frac{\tfrac{1}{2}(ck\log n)^2}{O\left(\Delta^{\lfloor\ell/2\rfloor}\log^{2-2b} n +c\Delta^{\lfloor\ell/2\rfloor}\log n)\right)}\right)
    & \text{(since $Mk=O(\Delta^{\lfloor\ell/2\rfloor})$)} \\
  & = \exp \left(-\frac{\tfrac{1}{2}(ck\log n)^2}{O\left(c\Delta^{\lfloor\ell/2\rfloor}\log^{2-2b} n\right)}\right) & (\text{for $c\ge 1$ and $b\le 1/2$}) \\
  & = \exp \left(-\frac{\tfrac{1}{2}c\Delta^{2\lfloor\ell/2\rfloor-1}\log^{2+2b} n}{O\left(\Delta^{\lfloor\ell/2\rfloor}\log^{2-2b} n\right)}\right) & \text{(by definition of $k$)}\\
  & = \exp\left(-\Omega(c\Delta^{\lfloor\ell/2\rfloor-1}\log^{4b} n)\right) \\
  & = n^{-\Omega(c)} \enspace ,
\end{align*}
provided that $b\ge 1/4$ or $\Delta^{\lfloor\ell/2\rfloor-1}\ge\log n$.
For any fixed $c$, taking $b=1/4$ gives
\begin{align*}
  \E(X)+t
    & = O(\Delta^{\lfloor\ell/2\rfloor-1/2}\log^{1-b} n + k\log n) \\
    & = O(\Delta^{\lfloor\ell/2\rfloor-1/2}(\log^{1-b} n + \log^{1+b} n)) \\
    & = O(\Delta^{\lfloor\ell/2\rfloor-1/2}\log^{5/4} n)
\end{align*}
Thus, $\Pr(X\ge c\Delta^{\lfloor\ell/2\rfloor-1/2}\log^{5/4} n) \le n^{-\Omega(c)}$, which completes the first part of the proof of \cref{l_degenerate_and_degree} for the cases $\ell=2$ and $\ell=3$.  To complete the ``furthermore'' clause of the proof we take $b=0$ and deduce that $\E(X)+t=O(\Delta^{\lfloor\ell/2\rfloor-1/2}\log n)$.

\section*{Acknowledgement}

Some of this research began during the \emph{Third Workshop on Geometry and Graphs (WoGaG~2015)}, March 8–13, 2015, and resumed during the \emph{Second Adriatic Workshop on Graphs and Probability (AWGP~2023)}, Jun 24–Jul 1, 2023. The authors are grateful to the organizers of both workshops for providing a stimulating and productive work environment.

\bibliographystyle{plainnat}
\bibliography{us2}

\begin{thebibliography}{9}
\providecommand{\natexlab}[1]{#1}
\providecommand{\url}[1]{\texttt{#1}}
\expandafter\ifx\csname urlstyle\endcsname\relax
  \providecommand{\doi}[1]{doi: #1}\else
  \providecommand{\doi}{doi: \begingroup \urlstyle{rm}\Url}\fi

\bibitem[Almeter et~al.(2019)Almeter, Demircan, Kallmeyer, Milans, and
  Winslow]{almeter.demircan.ea:graph}
Jordan Almeter, Samet Demircan, Andrew Kallmeyer, Kevin~G. Milans, and Robert
  Winslow.
\newblock Graph 2-rankings.
\newblock \emph{Graphs and Combinatorics}, 35\penalty0 (1):\penalty0 91--102,
  2019.
\newblock \doi{10.1007/s00373-018-1979-4}.
\newblock URL \url{https://doi.org/10.1007/s00373-018-1979-4}.

\bibitem[Bose et~al.(2020)Bose, Dujmovi{\'c}, Javarsineh, and
  Morin]{bose.dujmovic.ea:asymptotically}
Prosenjit Bose, Vida Dujmovi{\'c}, Mehrnoosh Javarsineh, and Pat Morin.
\newblock Asymptotically optimal vertex ranking of planar graphs.
\newblock \emph{CoRR}, abs/2007.06455, 2020.
\newblock URL \url{https://arxiv.org/abs/2007.06455}.

\bibitem[Boucheron et~al.(2013)Boucheron, Lugosi, and
  Massart]{boucheron.lugosi.ea:concentration}
Stéphane Boucheron, Gábor Lugosi, and Pascal Massart.
\newblock \emph{Concentration Inequalities: A Nonasymptotic Theory of
  Independence}.
\newblock Oxford University Press, Oxford, United Kingdom, first edition, 2013.

\bibitem[Devroye et~al.(2019)Devroye, Dujmovic, Frieze, Mehrabian, Morin, and
  Reed]{devroye.dujmovic.ea:notes}
Luc Devroye, Vida Dujmovic, Alan~M. Frieze, Abbas Mehrabian, Pat Morin, and
  Bruce~A. Reed.
\newblock Notes on growing a tree in a graph.
\newblock \emph{Random Struct. Algorithms}, 55\penalty0 (2):\penalty0 290--312,
  2019.
\newblock \doi{10.1002/rsa.20828}.
\newblock URL \url{https://doi.org/10.1002/rsa.20828}.

\bibitem[Diestel(2012)]{diestel:graph}
Reinhard Diestel.
\newblock \emph{Graph Theory, 4th Edition}, volume 173 of \emph{Graduate texts
  in mathematics}.
\newblock Springer, 2012.
\newblock ISBN 978-3-642-14278-9.

\bibitem[Karpas et~al.(2015)Karpas, Neiman, and
  Smorodinsky]{karpas.neiman.ea:on}
Ilan Karpas, Ofer Neiman, and Shakhar Smorodinsky.
\newblock On vertex rankings of graphs and its relatives.
\newblock \emph{Discret. Math.}, 338\penalty0 (8):\penalty0 1460--1467, 2015.
\newblock \doi{10.1016/j.disc.2015.03.008}.
\newblock URL \url{https://doi.org/10.1016/j.disc.2015.03.008}.

\bibitem[Nešetřil and Ossona~de Mendez(2006)]{nesetril.ossona:tree-depth}
Jaroslav Nešetřil and Patrice Ossona~de Mendez.
\newblock Tree-depth, subgraph coloring and homomorphism bounds.
\newblock \emph{Eur. J. Comb.}, 27\penalty0 (6):\penalty0 1022--1041, 2006.
\newblock \doi{10.1016/j.ejc.2005.01.010}.
\newblock URL \url{https://doi.org/10.1016/j.ejc.2005.01.010}.

\bibitem[Nešetřil and Ossona~de Mendez(2012)]{nesetril.ossona:sparsity}
Jaroslav Nešetřil and Patrice Ossona~de Mendez.
\newblock \emph{Sparsity: Graphs, Structures, and Algorithms}, volume~28 of
  \emph{Algorithms and combinatorics}.
\newblock Springer, 2012.
\newblock ISBN 978-3-642-27874-7.
\newblock \doi{10.1007/978-3-642-27875-4}.
\newblock URL \url{https://doi.org/10.1007/978-3-642-27875-4}.

\bibitem[Shalu and Antony(2020)]{shalu.antony:complexity}
M.~A. Shalu and Cyriac Antony.
\newblock Complexity of restricted variant of star colouring.
\newblock In Manoj Changat and Sandip Das, editors, \emph{Algorithms and
  Discrete Applied Mathematics - 6th International Conference, {CALDAM} 2020,
  Hyderabad, India, February 13-15, 2020, Proceedings}, volume 12016 of
  \emph{Lecture Notes in Computer Science}, pages 3--14. Springer, 2020.
\newblock \doi{10.1007/978-3-030-39219-2\_1}.
\newblock URL \url{https://doi.org/10.1007/978-3-030-39219-2\_1}.

\end{thebibliography}

\end{document}